\newtheorem{theorem}{Theorem}[section]
\newtheorem{proposition}[theorem]{Proposition}
\theoremstyle{definition}
\newtheorem{conjecture}[theorem]{Conjecture}
\newcommand*{\conjlab}[1]{\label{conj:#1}}
\newcommand*{\conjref}[1]{Conjecture \ref{conj:#1}}
\newcommand*{\proplab}[1]{\label{prop:#1}}
\newcommand*{\propref}[1]{Proposition \ref{prop:#1}}
\newcommand*{\thmlab}[1]{\label{thm:#1}}
\newcommand*{\tablelab}[1]{\label{table:#1}}
\def\b0{{\bf 0}}
\def\calb{\mathcal{B}}
\def\calg{\mathcal{G}}
\def\calp{\mathcal{P}}
\def\calw{\mathcal{W}}
\def\set#1{\{#1\}}
     \title{Gr\"obner bases of toric ideals associated with matroids}
     \author{Ken-ichi Hayase}
     \address{Department of Pure and Applied Mathematics,
Graduate school of Information Science and Technology,
Osaka University,
Suita, Osaka 565-0871, Japan.}
     \email{k-hayase@ist.osaka-u.ac.jp}
     \thanks{}
     \author{Takayuki Hibi}
     \address{Department of Pure and Applied Mathematics,
Graduate school of Information Science and Technology,
Osaka University,
Suita, Osaka 565-0871, Japan.}
     \email{hibi@math.sci.osaka-u.ac.jp}
     \thanks{}
    \author{Koyo Katsuno}
     \address{Department of Pure and Applied Mathematics,
Graduate school of Information Science and Technology,
Osaka University,
Suita, Osaka 565-0871, Japan.}
     \email{k-katsuno@ist.osaka-u.ac.jp}
     \thanks{}
    \author{Kazuki Shibata}
     \address{Department of Mathematics,
Collage of Science,
Rikkyo University,
Toshima-ku, Tokyo 171-8501, Japan.}
     \email{k-shibata@rikkyo.ac.jp}
     \thanks{}
     \date{}
     \keywords{Toric ideals, Gr\"obner bases and matroids}
     \subjclass{Primary 13P10, Secondary 05B35}
\begin{document}
\begin{abstract}
In 1980, White conjectured that the toric ideal of a matroid is generated by quadratic binomials corresponding to a symmetric exchange.
In this paper, we compute Gr\"obner bases of toric ideals associated with matroids and show that, for every matroid on ground sets of size at most seven except for two matroids, Gr\"obner bases of toric ideals consist of quadratic binomials corresponding to a symmetric exchange.
\end{abstract}
     \maketitle

\section*{Introduction}
A {\it matroid} $M$ is a pair $(E,\calb)$, where $
E$ is a finite set and $\calb$ is a non-empty collection of subsets of $E$, which satisfies the following axiom known as the {\it symmetric exchange property}.
\begin{itemize}
\item For any $B$ and $B^{'}$ in $\calb$, for any $\alpha \in B \setminus B^{'}$, there exists $\beta \in B^{'} \setminus B$ such that both $(B \cup \{ \beta \}) \setminus \{ \alpha \}$ and $(B^{'} \cup \{ \alpha \}) \setminus \{ \beta \}$ are also members of $\calb$.
\end{itemize}
If $M$ is the matroid $(E,\calb)$, then $M$ is called a matroid {\it on} $E$.
The member of $\calb$ is called a {\it basis} of $M$ and $E$ is called a {\it ground set} of $M$.
All the members of $\calb$ have the same cardinality.
This cardinality is said to be the {\it rank} of $M$ and is denoted as $r(M)$.
We write $\calb(M)$ for $\calb$ and $E(M)$ for $E$.
For a matroid $M$, let $S_M=K[x_{B}~|~B \in \calb(M)]$ and $K[T]=K[t_{i}~|~i \in E(M)]$ be two polynomial rings over a field $K$.
We consider the ring homomorphism $\pi_{M}~:~S_M \rightarrow K[T]$ defined by $x_{B} \mapsto \prod_{l \in B} t_l$.
The {\it toric ideal} $I_M$ of $M$ is the kernel of $\pi_{M}$ and the image of $\pi_M$ is called the {\it basis monomial ring} of $M$, and it was introduced by White \cite{White1}.
It is known that $I_M$ is generated by homogeneous binomials and reduced Gr\"obner bases of $I_M$ consist of homogeneous binomials (see \cite{Sturmfels1}).
White showed that the basis monomial ring is normal for any matroid \cite{White1} and presented the following conjecture.
\begin{conjecture}[\cite{White2}]
\conjlab{White_gene}
For any matroid $M$, $I_M$ is generated by the binomials $x_{B_i} x_{B_j} -x_{B_k} x_{B_l}$ such that the pair of bases $B_k$, $B_l$ can be obtained from the pair of bases $B_i$, $B_j$ by a symmetric exchange.
\end{conjecture}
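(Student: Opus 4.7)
The plan is to attack this conjecture by a mixture of Gr\"obner basis computation and direct combinatorial verification, since a general proof has remained out of reach since White's 1980 paper, and only partial results (e.g.\ for strongly base-orderable matroids, transversal matroids of small rank, sparse paving matroids) are currently known. My strategy, therefore, is \emph{verification for small ground sets}, which is the scale at which a machine computation can settle the question matroid by matroid and at which the abstract's claim of near-universal confirmation can be made precise.

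First, I would enumerate all isomorphism classes of matroids on ground sets of size $n \leq 7$, using the standard tabulations, and reduce the workload via matroid isomorphism and duality (noting that $I_M$ and $I_{M^{*}}$ coincide under the obvious identification $B \leftrightarrow E \setminus B$, so one representative per dual pair suffices). For each representative $M$, I would explicitly construct the polynomial ring $S_M = K[x_B : B \in \calb(M)]$ and the toric map $\pi_M$, and compute a reduced Gr\"obner basis of $I_M = \ker \pi_M$ with respect to a carefully chosen term order; a reverse lexicographic order adapted to $\calb(M)$ is the natural first attempt.

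Next, for every binomial $x_{B_i} x_{B_j} - x_{B_k} x_{B_l}$ that appears in the reduced Gr\"obner basis, I would test the symmetric exchange condition directly from the axiom: verify that there exist $\alpha \in B_i \setminus B_j$ and $\beta \in B_j \setminus B_i$ with $B_k = (B_i \setminus \{\alpha\}) \cup \{\beta\}$ and $B_l = (B_j \setminus \{\beta\}) \cup \{\alpha\}$. Since Sturmfels's theorem guarantees that the reduced Gr\"obner basis consists of binomials, and since \conjref{White_gene} (in its quadratic form) holds for $M$ precisely when every element of some generating set of $I_M$ is of symmetric-exchange type, a pointwise check of the reduced Gr\"obner basis elements is logically sufficient.

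The main obstacle I anticipate is twofold. Computationally, $|\calb(M)|$ grows quickly with $n$, so $S_M$ may have dozens of variables and $I_M$ hundreds of generators; keeping the Gr\"obner basis computation tractable across all matroids of size seven likely requires aggressive use of symmetry, duality, and a sensible ordering of the enumeration. Conceptually, the reduced Gr\"obner basis may contain elements of degree $>2$, or quadratic elements that do \emph{not} correspond to any symmetric exchange, which at a given term order would look like a failure of the conjecture. In that event one must decide between switching to another term order and declaring the matroid genuinely exceptional; since the abstract already signals that exactly two matroids on $\le 7$ elements resist confirmation, the delicate step is to isolate these exceptions, rule out that they are artifacts of a poor term order, and report them honestly rather than attempting a universal proof.
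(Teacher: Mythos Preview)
Neither you nor the paper proves this statement: it is White's open conjecture, and the paper's contribution is a computational verification of the \emph{stronger} \conjref{White_GB} (existence of a Gr\"obner basis of symmetric-exchange quadrics) for $|E(M)|\le 7$, with two exceptions. Note incidentally that for \conjref{White_gene} itself there are \emph{no} exceptions at this size: the exceptional matroids are $F_7$ and $F_7^{\ast}$, and since $r(F_7)=3$, Kashiwabara's theorem already gives \conjref{White_gene} for $F_7$, hence by duality for $F_7^{\ast}$ as well. Your closing paragraph conflates the two conjectures on this point.

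Your verification plan is similar in spirit to the paper's but omits the structural reductions that make it tractable. You invoke duality, which the paper also uses, but you miss the decisive step: by \propref{closed2} the property of having a quadratic Gr\"obner basis is preserved under direct sum and $2$-sum, so one may restrict to $3$-connected matroids. This cuts the $108$ rank-$3$ matroids on seven elements down to $18$. Of those, the paper disposes of eight theoretically by checking that they are \emph{base-sortable} in Blum's sense, which guarantees a quadratic Gr\"obner basis via the sorting order without any ideal computation. For the remaining handful the paper does not rely on a single generic order as your reverse-lex suggestion would; it searches for a bespoke lexicographic variable ordering for each matroid, and then uses elimination of the largest variables to pass from these to matroids obtained by removing further bases. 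Without the $3$-connectedness reduction and the base-sortability shortcut, a brute-force sweep over all isomorphism classes under one fixed term order would be both far larger and prone to exactly the spurious ``failures'' you anticipate, since a non-quadratic reduced Gr\"obner basis under one order says nothing about either conjecture.
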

\conjref{White_gene} is true for graphic matroids \cite{Blasiak}, matroids with $r(M) \le 3$ \cite{Kashiwabara}, sparse paving matroids \cite{Bonin} and strongly base orderable matroids \cite{LaMi}.
In the present paper, the following stronger conjecture will be studied:
\begin{conjecture}
\conjlab{White_GB}
For any matroid $M$, $I_M$ has a Gr\"obner basis consisting of the binomials $x_{B_i} x_{B_j} -x_{B_k} x_{B_l}$ such that the pair of bases $B_k$, $B_l$ can be obtained from the pair of bases $B_i$, $B_j$ by a symmetric exchange.
\end{conjecture}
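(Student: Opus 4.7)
Since \conjref{White_GB} is open in full generality, my plan is to attack it computationally for all matroids on ground sets of size at most seven, as foreshadowed by the abstract. First I would obtain a complete list of isomorphism classes of matroids on $n \le 7$ elements from the published classification. For each representative $M$, I would assemble $\calb(M)$, construct the polynomial rings $S_M$ and $K[T]$, and implement the ring homomorphism $\pi_M$ explicitly. The toric ideal $I_M$ can then be extracted as $\ker(\pi_M)$ by standard elimination using software such as \texttt{Macaulay2}, \texttt{Singular}, or \texttt{4ti2}.

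Next, I would generate the combinatorial set $\cals(M)$ of candidate binomials $x_{B_i}x_{B_j} - x_{B_k}x_{B_l}$ for which $\{B_k, B_l\}$ is obtained from $\{B_i, B_j\}$ by a single symmetric exchange: iterate over unordered pairs of bases, pick $\alpha \in B_i \setminus B_j$, and search for a $\beta \in B_j \setminus B_i$ such that both $(B_i \cup \{\beta\}) \setminus \{\alpha\}$ and $(B_j \cup \{\alpha\}) \setminus \{\beta\}$ belong to $\calb(M)$. For each $M$, I would then compute a reduced Gr\"obner basis of $I_M$ under a chosen monomial order and check, element by element, whether every generator lies in $\cals(M)$. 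If so, \conjref{White_GB} is verified for $M$ under that order.

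The hard parts are the choice of term order and the treatment of the resistant matroids. A single uniform order is unlikely to succeed across all $M$, so the plan is to sweep a family of monomial orders — starting with lexicographic and reverse-lexicographic orders induced by various labellings of $\calb(M)$, and, when necessary, moving through weight vectors representative of distinct maximal cones of the Gr\"obner fan of $I_M$. For each $M$ the search terminates successfully as soon as some order produces a Gr\"obner basis contained in $\cals(M)$. For matroids on which every tested order fails, one must argue that no order works at all; the cleanest route is to isolate a binomial in the universal Gr\"obner basis of $I_M$ that does not lie in $\cals(M)$, thereby certifying that the stronger \conjref{White_GB} genuinely fails (while leaving \conjref{White_gene} potentially intact). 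Identifying and analyzing these exceptional matroids, which the abstract announces to be exactly two, will be the principal content of the paper, and controlling the combinatorial explosion of pairs of bases for rank-three and rank-four matroids on seven elements is the chief computational obstacle.
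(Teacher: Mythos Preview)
Your overall computational strategy is in the same spirit as the paper's, but you omit the structural reductions that make the verification feasible and, more importantly, you misread what the two exceptional matroids represent. The paper first invokes closure under duality and under direct sums and $2$-sums (\propref{closed} and \propref{closed2}) to reduce, for $|E(M)|=7$, to the eighteen $3$-connected rank-$3$ matroids in Table~1, rather than sweeping all isomorphism classes on seven elements. It then uses Kashiwabara's result that in rank~$3$ every quadratic binomial of $I_M$ already arises from a symmetric exchange, so the task collapses to finding \emph{any} quadratic Gr\"obner basis; eight of the eighteen are handled by verifying base-sortability, and explicit lexicographic orders (or eliminations thereof) are exhibited for nine more. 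None of these shortcuts appear in your plan.

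The genuine gap is in your handling of the exceptions. Exhibiting a binomial in the universal Gr\"obner basis of $I_M$ that lies outside $\cals(M)$ shows only that \emph{some} term order has a reduced Gr\"obner basis containing a non-exchange element; it does not show that \emph{every} order does, and therefore it cannot certify that \conjref{White_GB} fails for $M$. To disprove the conjecture for a given $M$ you would have to rule out every cone of the Gr\"obner fan, not just find a bad element somewhere in the union. In fact the paper makes no failure claim at all: for $M_{14}=F_7$ and its dual the authors simply report that no suitable order was found, and the case is left open. The abstract's ``except for two matroids'' means ``unverified,'' not ``counterexamples.''
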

\conjref{White_GB} is true for uniform matroids $U_{r,d}$ \cite{Sturmfels1}, matroids with $r(M) \le 2$ \cite{Blum1,OH}, base-sortable matroids \cite{Blum1} and lattice path matroids \cite{Schweig}.


In this paper, we compute a Gr\"obner basis of $I_M$ and show that \conjref{White_GB} is true for every matroid $M$ with $|E(M)| \le 7$ except for two matroids.

\section{Operations on matroids}
In this section, we show that \conjref{White_GB} is true for every matroid $M$ with $|E(M)| \le 6$.

Let $M$ be a matroid.
Then the {\it dual} of $M$, denoted as $M^{\ast}$, is a matroid on $E(M)$ whose collection of bases is $\{ E(M) \setminus B~|~ B \in \calb(M)\}$.
\begin{proposition}[\cite{Blum1,White1}]
\proplab{closed}
The class of matroids such that \conjref{White_GB} is true is closed under the duality.
\end{proposition}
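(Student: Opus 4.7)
The plan is to exhibit an explicit isomorphism between the toric data of $M$ and of $M^{\ast}$ under which symmetric exchange binomials correspond to symmetric exchange binomials, and then transfer Gr\"obner bases through this isomorphism.

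First, I would set $E=E(M)$ and define the $K$-algebra isomorphism
\[
\phi : S_M \longrightarrow S_{M^{\ast}}, \qquad x_B \longmapsto x_{E\setminus B},
\]
which is well defined and bijective because $B \mapsto E\setminus B$ is a bijection $\calb(M)\to\calb(M^{\ast})$. I would then check that $\phi(I_M)=I_{M^{\ast}}$. The cleanest way is to observe that for any bases $B_1,\dots,B_s$ and $B'_1,\dots,B'_s$ of $M$, the condition $B_1\cup\cdots\cup B_s = B'_1\cup\cdots\cup B'_s$ as multisets on $E$ is equivalent, element by element, to $(E\setminus B_1)\cup\cdots\cup(E\setminus B_s)=(E\setminus B'_1)\cup\cdots\cup(E\setminus B'_s)$ as multisets. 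Since $I_M$ is generated by binomials encoding exactly such multiset equalities, $\phi$ identifies $I_M$ with $I_{M^{\ast}}$.

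Next I would verify the key compatibility: $\phi$ sends symmetric exchange binomials to symmetric exchange binomials. Suppose $x_{B_i}x_{B_j}-x_{B_k}x_{B_l}$ comes from a symmetric exchange in $M$, so there exist $\alpha\in B_i\setminus B_j$ and $\beta\in B_j\setminus B_i$ with $B_k=(B_i\cup\{\beta\})\setminus\{\alpha\}$ and $B_l=(B_j\cup\{\alpha\})\setminus\{\beta\}$. Taking complements in $E$ and writing $B^{\ast}=E\setminus B$, we have $\alpha\in B_j^{\ast}\setminus B_i^{\ast}$ and $\beta\in B_i^{\ast}\setminus B_j^{\ast}$, and
\[
B_k^{\ast}=(B_i^{\ast}\cup\{\alpha\})\setminus\{\beta\}, \qquad B_l^{\ast}=(B_j^{\ast}\cup\{\beta\})\setminus\{\alpha\},
\]
which is precisely a symmetric exchange in $M^{\ast}$ (with the roles of $\alpha$ and $\beta$ interchanged). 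Hence $\phi$ induces a bijection between the respective sets of symmetric exchange binomials.

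Finally, to transfer Gr\"obner bases, fix any monomial order $<$ on $S_M$ and pull it back along $\phi^{-1}$ to obtain a monomial order $<^{\ast}$ on $S_{M^{\ast}}$. Since $\phi$ is a monomial $K$-algebra isomorphism, it preserves leading terms and initial ideals, so a set $G\subseteq I_M$ is a Gr\"obner basis of $I_M$ with respect to $<$ if and only if $\phi(G)$ is a Gr\"obner basis of $I_{M^{\ast}}$ with respect to $<^{\ast}$. Combining this with the previous step, a Gr\"obner basis of $I_M$ consisting of symmetric exchange binomials yields one for $I_{M^{\ast}}$, which proves the proposition. The argument is essentially formal; the only nontrivial point, and the step I would double-check most carefully, is the bookkeeping in passing from a symmetric exchange in $M$ to one in $M^{\ast}$ via complements.
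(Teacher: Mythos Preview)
Your argument is correct. The paper itself does not prove this proposition; it simply cites it from \cite{Blum1,White1}. The route you take---the relabelling isomorphism $x_B\mapsto x_{E\setminus B}$, the observation that multiset equalities of bases are preserved under complementation, and the check that symmetric exchanges dualize to symmetric exchanges---is exactly the natural one and is essentially what underlies the cited references. One small remark: in Step~2 you should note explicitly that the two monomials in a binomial of $I_M$ have the same degree (which follows from homogeneity of $I_M$), since the ``complement the multiset'' computation uses that both sides involve the same number $s$ of bases; you implicitly assume this, and it is true, but it is worth saying.
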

By \propref{closed}, it follows that \conjref{White_GB} is true for every matroid $M$ with $|E(M)| \le 5$ and, in order to prove that \conjref{White_GB} is true, we may assume that $3 \le r(M) \le \lfloor \frac{|E(M)|}{2} \rfloor$.
For two matroids $M_1$ and $M_2$ with $E(M_1) \cap E(M_2)=\emptyset$,
a matroid whose ground set is $E(M_1) \cup E(M_2)$ and whose collection of bases is
$
\{ B \cup D~|~ B \in \calb(M_1), D \in \calb(M_2)\}
$
is called the {\it direct sum} of $M_1$ and $M_2$ and is denoted as $M_1 \oplus M_2$.
Let $M_1$ and $M_2$ be matroids with $E(M_1) \cap E(M_2)=\{ p \}$ and assume that neither $M_1$ nor $M_2$ has $p$ as a loop or a coloop, where, for a matroid $M$, an element $p \in E(M)$ is called a {\it loop} (resp. a {\it coloop}) of $M$ if, for any $B \in \calb(M)$, $p \notin B$ (resp. $p \in B$).
Then the $2$-{\it sum} $M_1 \oplus_2 M_2$ of $M_1$ and $M_2$ is a matroid on $(E(M_1) \cup E(M_2)) \setminus \{p\}$ whose collection of bases is
$$
\{ (B \cup D) \setminus \{ p \}~|~ B \in \calb(M_1), D \in \calb(M_2), B\cap D=\emptyset,p \in B\cup D\}.
$$


A matroid $M$ is said to be $3$-{\it connected} if it can be written as neither a direct sum of two non-empty matroids nor a $2$-sum of two matroids on ground sets of size at least three \cite{Oxley}. 
\begin{proposition}[\cite{Blum1,Shibata}]
\proplab{closed2}
Let $M_1$ and $M_2$ be matroids.
If toric ideals $I_{M_1}$ and $I_{M_2}$ of two matroids $M_1$ and $M_2$ have a quadratic Gr\"obner basis,
then so do $I_{M_1 \oplus M_2}$ and $I_{M_1 \oplus_2 M_2}$.
\end{proposition}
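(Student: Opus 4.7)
The plan is to handle the two operations separately and, in each case, lift quadratic Gr\"obner bases of $I_{M_1}$ and $I_{M_2}$ to one for the combined matroid via Buchberger's criterion.

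For the direct sum $M_1\oplus M_2$, first identify $\calb(M_1\oplus M_2)$ with $\calb(M_1)\times\calb(M_2)$ via $(B,D)\mapsto B\cup D$ and relabel the variables of $S_{M_1\oplus M_2}$ as $x_{(B,D)}$. Since $E(M_1)\cap E(M_2)=\emptyset$, the map $\pi_{M_1\oplus M_2}$ sends $x_{(B,D)}$ to $u_Bv_D$, where $u_B=\prod_{l\in B}t_l$ and $v_D=\prod_{l\in D}t_l$, so the image algebra is the Segre product of the basis monomial rings of $M_1$ and $M_2$. Starting from quadratic Gr\"obner bases $\calg_1$ of $I_{M_1}$ and $\calg_2$ of $I_{M_2}$, I would assemble a generating set consisting of lifts of $\calg_1$ (fixing a common second coordinate), the analogous lifts of $\calg_2$ (fixing a common first coordinate), and the Segre-type binomials $x_{(B,D)}x_{(B',D')}-x_{(B,D')}x_{(B',D)}$. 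Endowing $S_{M_1\oplus M_2}$ with a block monomial order that refines the orders used for $\calg_1$ and $\calg_2$, an S-pair check via Buchberger's criterion yields the desired quadratic Gr\"obner basis.

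For the $2$-sum $M_1\oplus_2 M_2$, the bases correspond to pairs $(B,D)\in\calb(M_1)\times\calb(M_2)$ with $B\cap D=\emptyset$ and $p\in B\cup D$, so that exactly one of $B,D$ contains the shared element $p$. This splits the variables of $S_{M_1\oplus_2 M_2}$ into two blocks, one for $p\in B$ and one for $p\in D$, and within each block one again has a Segre-like structure, so intra-block generators can be produced in the spirit of the direct sum argument from the quadratic Gr\"obner bases of $I_{M_1}$ and $I_{M_2}$. Across the two blocks one must add the quadratic binomials corresponding to ``swapping'' $p$ from one side to the other; these exist as quadratic elements of $I_{M_1\oplus_2 M_2}$ precisely because $p$ is neither a loop nor a coloop of $M_1$ or $M_2$, which guarantees the freedom to exchange $p$ in each factor.

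The main obstacle is verifying Buchberger's criterion for the S-pairs that straddle the two blocks in the $2$-sum case: one must show that every such S-pair reduces to zero modulo the listed quadratic binomials. This is the most delicate step, and it hinges on simultaneously exploiting the symmetric exchange axiom in both $M_1$ and $M_2$, together with a carefully chosen block order that places the swap binomials so that their leading terms dominate the cross-block S-polynomials. Once the intra-block and cross-block reductions have been carried out, the quadraticity of the resulting Gr\"obner basis is immediate and the proposition follows.
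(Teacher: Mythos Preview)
The paper does not supply its own proof of this proposition: it is quoted verbatim from the literature, with the citation \cite{Blum1,Shibata} carrying the entire argument. So there is no in-paper proof to compare against; what you have written is an attempt to reproduce results proved elsewhere.

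Your direct-sum sketch is essentially the standard Segre-product argument and would go through with routine bookkeeping. The $2$-sum sketch, however, has a genuine gap. You correctly identify that the cross-block S-pairs are the crux, but you do not resolve them: saying the reduction ``hinges on simultaneously exploiting the symmetric exchange axiom in both $M_1$ and $M_2$, together with a carefully chosen block order'' is a description of the difficulty, not a proof. Concretely, two things are missing. First, your ``swap binomials'' are not pinned down: given $(B,D)$ with $p\in B$ and $(B',D')$ with $p\in D'$, you need an explicit quadratic element of $I_{M_1\oplus_2 M_2}$ whose leading term is $x_{(B,D)}x_{(B',D')}$, and the mere fact that $p$ is not a loop or coloop does not by itself produce one compatible with your block order. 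Second, even granting such binomials, the S-pair between a swap binomial and an intra-block lift of an element of $\calg_1$ or $\calg_2$ does not reduce to zero for formal reasons; one must actually trace through the exchange structure, and this is precisely the content of the cited papers. Shibata's argument, for instance, proceeds by interpreting the $2$-sum via series and parallel connections and analyzing the toric ideal through a fiber-product construction, which gives a concrete handle on these cross terms that your outline lacks.
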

By \propref{closed} and 1.2, we have
\begin{proposition}
\conjref{White_GB} is true for every matroid $M$ with $|E(M)|=6$.
\end{proposition}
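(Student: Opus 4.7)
The plan combines the rank reduction already noted in the paper with a 3-connectivity dichotomy, followed by an explicit computation on a short list of 3-connected matroids.

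First, by the discussion following \propref{closed}, it suffices to treat the case $r(M)=3$: the rank $r(M)\le 2$ case is already known, and duality lets us assume $r(M)\le \lfloor 6/2 \rfloor = 3$. Second, I would split on 3-connectivity. If $M$ is not 3-connected, then either $M=M_1\oplus M_2$ with both $M_i$ nonempty, or $M=M_1\oplus_2 M_2$ with $|E(M_i)|\ge 3$. In either decomposition one has $|E(M_i)|\le 5$ for $i=1,2$, so \conjref{White_GB} already holds for each $M_i$, and \propref{closed2} transfers a quadratic symmetric-exchange Gröbner basis to $I_M$.

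It therefore remains to verify \conjref{White_GB} for the finitely many 3-connected (simple) matroids of rank 3 on 6 elements; this is a short and explicit list, available from standard catalogs of small matroids. The uniform matroid $U_{3,6}$ is one entry and is handled by \cite{Sturmfels1}. For each of the remaining representatives on this list, I would compute, with the aid of a computer algebra system, the reduced Gröbner basis of $I_M$ with respect to a chosen term order, and then inspect case by case that every binomial appearing in the basis has the form $x_{B_i}x_{B_j}-x_{B_k}x_{B_l}$ arising from a symmetric exchange of the pair $\{B_i,B_j\}$.

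The main obstacle is this final, computational step. Even on 6 elements the polynomial ring $S_M$ can contain up to $\binom{6}{3}=20$ variables, so the reduced Gröbner bases are non-trivial and depend sensitively on the term order chosen. The substantive work is less in producing a Gröbner basis than in inspecting each resulting quadratic binomial and checking that the two monomial factors correspond to pairs of bases actually related by a symmetric exchange, rather than by the weaker multiple-basis exchange that could in principle appear. Once this verification succeeds on each 3-connected matroid in the list, the two reductions above yield \conjref{White_GB} for every matroid $M$ with $|E(M)|=6$.
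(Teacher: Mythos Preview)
Your reduction is exactly the paper's: pass to rank $3$ by duality, split off the non-$3$-connected case via \propref{closed2}, and then treat the finitely many $3$-connected rank-$3$ matroids on six elements. Where you diverge is in the endgame. The paper invokes Kashiwabara's theorem that in rank $3$ \emph{every} quadratic binomial of $I_M$ already corresponds to a symmetric exchange, so your ``main obstacle'' --- inspecting each quadratic generator for the symmetric-exchange property --- evaporates: one only needs a quadratic Gr\"obner basis, not a further combinatorial check. The paper then avoids computation altogether by naming the five $3$-connected matroids ($U_{3,6}$, $\mathcal{W}^3$, $M(K_4)$, $P_6$, $Q_6$) via \cite[Corollary 12.2.19]{Oxley} and citing existing results: $U_{3,6}$, $\mathcal{W}^3$, $P_6$, $Q_6$ are base-sortable \cite{Blum1,Sturmfels1}, and $I_{M(K_4)}$ has a quadratic lex Gr\"obner basis by \cite{Blum2}. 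Your computational route would also succeed, but the paper's is cleaner and sidesteps precisely the verification step you flagged as the hard part.
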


\begin{proof}
Let $M$ be a matroid $|E(M)|=6$.
By \propref{closed}, we may assume that $r(M)=3$.
Since $r(M)=3$, any quadratic binomial in $I_M$ corresponds to a symmetric exchange \cite{Kashiwabara}.
Moreover if $M$ is not $3$-connected, then $M$ can be written as either $M=M_1 \oplus M_2$ or $M=M_1 \oplus_2 M_2$ for some two matroids $M_1$ and $M_2$.
Hence, by \propref{closed2}, it is enough to prove that the toric ideal $I_M$ of every $3$-connected matroid $M$ has a quadratic Gr\"obner basis.
By \cite[Corollary 12.2.19]{Oxley}, $M$ is one of $P_6$, $Q_6$, $\calw^3$, $M(K_4)$ and $U_{3,6}$.
Toric ideals $I_{\calw^3}$ and $I_{U_{3,6}}$ have a quadratic Gr\"obner basis since $\calw^3$ and $U_{3,6}$ are base-sortable \cite{Blum1,Sturmfels1}, and the toric ideal $I_{M(K_4)}$ has a quadratic Gr\"obner basis with respect to the lexicographic order on $S_{M(K_4)}$ \cite{Blum2}.
Moreover, by using Blum's results \cite{Blum1}, it follows that $P_6$ and $Q_6$ are base-sotable, in particular, $I_{P_6}$ and $I_{Q_6}$ have a quadratic Gr\"obner basis.
\end{proof}

\section{Gr\"obner bases of Toric ideals of matroids on ground sets of size seven}
In this section, for every $3$-connected matroid $M$ of $r(M)=3$ with $|E(M)|=7$, we compute a Gr\"obner basis of $I_M$.
For a matroid $M$, let
$$
J_M = \left\langle
x_{B}-\prod_{l \in B} t_l~|~B \in \calb(M)
\right\rangle
$$
be the ideal in $\widetilde{S}_M=K[x_{B},t_{i}~|~B \in \calb(M), i \in E(M)]$.
Then the toric ideal $I_M$ is equal to the intersection of $J_M$ and $S_M$ \cite[Lemma 1.5.11]{Hibi}.
Therefore if $\calg_M$ is a Gr\"obner basis for $J_M$ with respect to the lexicographic order on $\widetilde{S}_M$ induced by the ordering $t_i > x_{B}$ for any $i \in E(M)$ and $B \in \calb(M)$, then $\calg_M \cap S_M$ is a Gr\"obner basis for $I_M$.
On the other hand, there are $108$ matroids of rank $3$ on ground sets of size seven \cite{MayRoy} and all collections of bases are given at

\texttt{http://www-imai.is.s.u-tokyo.ac.jp/\textasciitilde ymatsu/matroid/index.html}.

\noindent
By using software package \texttt{MACAULAY2} \cite{Macaulay2}, we can check that the number of all $3$-connected matroids is $18$ (Table $1$).
Hence, in order to prove that \conjref{White_GB} is true for every matroid $M$ with $|E(M)|=7$, it is enough to prove that toric ideals of these $18$ $3$-connected matroids have a quadratic Gr\"obner basis.
\begin{table}[htbp]
\tablelab{lists}
\begin{center}
\begin{tabular}{c|l}
\hline
Matroid & A collection of bases of $M_i$ \\
\hline
\hline
$M_{1}(=U_{3,7})$ & $\calp_3(7)$ \\
\hline
$M_{2}$ & $\calp_3(7) \setminus \{\{1,2,3\}\}$ \\
\hline
$M_{3}$ & $\calp_3(7) \setminus \{\{1,2,3\},\{4,5,6\}\}$ \\
\hline
$M_{4}$ & $\calp_3(7) \setminus \{\{1,2,3\},\{3,4,5\}\}$ \\
\hline
$M_{5}$ & $\calp_3(7) \setminus \{\{1,2,3\},\{1,6,7\},\{3,4,5\}\}$ \\
\hline
$M_{6}$ & $\calp_3(7) \setminus \{\{1,2,3\},\{1,4,5\},\{1,6,7\}\}$ \\
\hline
$M_{7}$ & $\calp_3(7) \setminus \{\{1,2,3\},\{1,4,5\},\{2,4,6\}\}$ \\
\hline
$M_{8}$ & $\calp_3(7) \setminus \{\{1,2,3\},\{1,4,5\},\{2,4,6\},\{3,5,7\}\}$ \\
\hline
$M_{9}$ & $\calp_3(7) \setminus \{\{1,2,3\},\{1,4,5\},\{2,4,6\},\{3,4,7\}\}$ \\
\hline
$M_{10}(=P_7)$ & $\calp_3(7) \setminus \{\{1,2,3\},\{1,4,5\},\{2,4,6\},\{3,4,7\},\{5,6,7\}\}$ \\
\hline
$M_{11}$ & $\calp_3(7) \setminus \{\{1,2,3\},\{1,4,5\},\{2,4,6\},\{3,5,6\}\}$ \\
\hline
$M_{12}$ & $\calp_3(7) \setminus \{\{1,2,3\},\{1,4,5\},\{2,4,6\},\{3,5,6\},\{3,4,7\}\}$ \\
\hline
$M_{13}(=F^{-}_7)$ & $\calp_3(7) \setminus \{\{1,2,3\},\{1,4,5\},\{2,4,6\},\{3,5,6\},\{3,4,7\},\{2,5,7\}\}$ \\
\hline
$M_{14}(=F_7)$ & $\calp_3(7) \setminus \{\{1,2,3\},\{1,4,5\},\{2,4,6\},\{3,5,6\},\{3,4,7\},\{2,5,7\},\{1,6,7\}\}$ \\
\hline
$M_{15}$ & $\calp_3(7) \setminus \{\{1,2,3\},\{1,2,4\},\{1,3,4\},\{2,3,4\}\}$ \\
\hline
$M_{16}$ & $\calp_3(7) \setminus \{\{1,2,3\},\{1,2,4\},\{1,3,4\},\{2,3,4\},\{4,5,6\}\}$ \\
\hline
$M_{17}$ & $\calp_3(7) \setminus \{\{1,2,3\},\{1,2,4\},\{1,3,4\},\{1,6,7\},\{2,3,4\},\{4,5,6\}\}$ \\
\hline
$M_{18}(=O_7)$ & $\calp_3(7) \setminus \{\{1,2,3\},\{1,2,4\},\{1,3,4\},\{2,3,4\},\{1,5,6\},\{2,5,7\},\{3,6,7\}\}$ \\
\end{tabular}
\caption{A list of all $3$-connected matroids of rank $3$ on $\{1,\ldots,7\}$}
\end{center}
\end{table}
First, for $i \in \{1,2,3,4,5,15,16,17\}$, 
let $B_1$, $B_2$ be two bases of $M_i$ and $D=\set{i_1,\ldots,i_6}$ be the multiset consisting of all elements of $B_1 \cup B_2$ with $i_1 \le \cdots \le i_6$.
Then both $\set{i_1,i_3,i_5}$ and $\set{i_2,i_4,i_6}$ are also bases of $M_i$.
Therefore, by \cite{Blum1,Sturmfels1}, it follows that $M_i$ is base-sortable.
Next, for $i \in \{6,7,9,11,18\}$, let $<_i$ be the lexicographic order on $\widetilde{S}_{M_i}$ induced by the following ordering:
\begin{itemize}
\item[$<_6:$] $t_1>\cdots>t_7>x_{236}>x_{126}>x_{245}>x_{136}>x_{457}>x_{257}>x_{156}>x_{125}>x_{127}>x_{345}>x_{146}>x_{267}>x_{456}>x_{124}>x_{357}>x_{347}>x_{356}>x_{247}>x_{147}>x_{157}>x_{467}>x_{235}>x_{237}>x_{246}>x_{367}>x_{567}>x_{346}>x_{256}>x_{137}>x_{134}>x_{234}>x_{135}$ 
\item[$<_7:$] $t_1>\cdots>t_7>x_{357}>x_{356}>x_{247}>x_{267}>x_{467}>x_{567}>x_{256}>x_{134}>x_{156}>x_{137}>x_{234}>x_{456}>x_{367}>x_{135}>x_{167}>x_{157}>x_{257}>x_{237}>x_{347}>x_{146}>x_{124}>x_{147}>x_{126}>x_{457}>x_{245}>x_{345}>x_{346}>x_{125}>x_{127}>x_{236}>x_{235}>x_{136}$, 
\item[$<_{9}:$] $t_1>\cdots>t_7>x_{567}>x_{127}>x_{237}>x_{137}>x_{235}>x_{367}>x_{234}>x_{247}>x_{357}>x_{356}>x_{256}>x_{124}>x_{134}>x_{135}>x_{147}>x_{257}>x_{456}>x_{146}>x_{267}>x_{126}>x_{346}>x_{467}>x_{167}>x_{125}>x_{245}>x_{136}>x_{457}>x_{236}>x_{156}>x_{345}>x_{157}$, 
\item[$<_{11}:$] $t_1>\cdots>t_7>x_{347}>x_{257}>x_{127}>x_{237}>x_{137}>x_{367}>x_{247}>x_{234}>x_{357}>x_{567}>x_{256}>x_{124}>x_{147}>x_{157}>x_{134}>x_{235}>x_{456}>x_{146}>x_{236}>x_{126}>x_{467}>x_{346}>x_{136}>x_{125}>x_{245}>x_{167}>x_{345}>x_{267}>x_{156}>x_{457}>x_{135}$. 
\item[$<_{18}:$] $t_1>\cdots>t_7>x_{146}>x_{137}>x_{167}>x_{126}>x_{136}>x_{246}>x_{236}>x_{247}>x_{245}>x_{135}>x_{456}>x_{357}>x_{237}>x_{457}>x_{145}>x_{267}>x_{256}>x_{345}>x_{125}>x_{157}>x_{346}>x_{347}>x_{127}>x_{235}>x_{356}>x_{567}>x_{467}>x_{147}$. 
\end{itemize}
With the help of \texttt{MACAULAY2}, for a Gr\"obner basis $\calg_{M_i}$ of $J_{M_i}$ with respect to $<_i$, we can check that $\calg_{M_i} \cap S_{M_i}$ is a Gr\"obner basis of $I_{M_i}$ consisting of quadratic binomials.
Moreover, by eliminating some largest variables, we obtain quadratic Gr\"obner bases of $I_{M_j}$ for $j \in \{8,10,12,13,18\}$.
Therefore, by using above results, we have
\begin{theorem}
\thmlab{Main}
\conjref{White_GB} is true for every matroid $M$ with $|E(M)|=7$ except for $M_{14}$ and $M^{\ast}_{14}$.
\end{theorem}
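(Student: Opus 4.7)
The plan is to combine the two reductions established in Section~1 with a case analysis over a short list of $3$-connected matroids. By \propref{closed} the class of matroids satisfying \conjref{White_GB} is closed under duality, so it is enough to handle one of $M_{14}, M_{14}^{\ast}$, and, as noted just after that proposition, we may assume $3 \le r(M) \le \lfloor |E(M)|/2 \rfloor = 3$. By \propref{closed2} the class is also closed under direct sums and $2$-sums, so the task reduces to verifying \conjref{White_GB} for every $3$-connected rank-$3$ matroid on a seven-element ground set. A \texttt{MACAULAY2} sweep through the $108$ rank-$3$ matroids on $\{1,\ldots,7\}$ at the cited URL identifies exactly $18$ such $3$-connected representatives, listed in Table~\ref{table:lists}; it therefore suffices to exhibit a quadratic Gr\"obner basis for each $I_{M_i}$ with $i \ne 14$.

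The eighteen cases naturally split into three groups. For $i \in \{1,2,3,4,5,15,16,17\}$, I would verify directly that $M_i$ satisfies the sorting condition: given any two bases $B_1,B_2$ and the multiset $B_1 \cup B_2 = \{i_1 \le \cdots \le i_6\}$, both $\{i_1,i_3,i_5\}$ and $\{i_2,i_4,i_6\}$ lie in $\calb(M_i)$. Since only finitely many pairs of bases need be tested for each matroid, this is a routine combinatorial check, and when it passes $M_i$ is base-sortable, so $I_{M_i}$ has a quadratic Gr\"obner basis by \cite{Blum1,Sturmfels1}.

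For $i \in \{6,7,9,11,18\}$ the strategy is computational. Working in $\widetilde{S}_{M_i}$ with a lexicographic order in which every $t$-variable exceeds every $x$-variable, a Gr\"obner basis $\calg_{M_i}$ of $J_{M_i}$ intersects $S_{M_i}$ in a Gr\"obner basis of $I_{M_i}$ by \cite[Lemma~1.5.11]{Hibi}. I would then search, with the aid of \texttt{MACAULAY2}, for an ordering of the $x$-variables under which $\calg_{M_i} \cap S_{M_i}$ consists only of quadratic binomials; the explicit orderings $<_6, <_7, <_9, <_{11}, <_{18}$ displayed above are the ones I expect to produce. The remaining cases $j \in \{8,10,12,13\}$ should then follow by inheritance: each such $M_j$ is obtained from a matroid already treated by deleting further bases, so its quadratic Gr\"obner basis can be read off by eliminating the largest $x$-variables in the ambient polynomial ring while preserving quadraticity.

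The main obstacle is the heuristic search for the orders $<_i$ themselves. A generic term order almost always produces a Gr\"obner basis containing many cubic or higher-degree elements, and no a priori procedure is known for producing an ordering under which every $S$-pair reduces to a quadratic normal form. Each of $<_6, <_7, <_9, <_{11}, <_{18}$ must be found by patient experimentation inside \texttt{MACAULAY2}, and it is precisely this step that fails for $M_{14} = F_7$: so far no lexicographic order on $\widetilde{S}_{M_{14}}$ has been discovered that yields a quadratic Gr\"obner basis, which is why the Fano matroid and its dual have to be excluded from the statement of \thmref{Main}.
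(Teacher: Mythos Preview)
Your proposal is correct and follows essentially the same route as the paper: the same duality/connectivity reductions to the eighteen $3$-connected rank-$3$ matroids, the same split into the base-sortable block $\{1,2,3,4,5,15,16,17\}$, the computational block $\{6,7,9,11,18\}$ with the explicit lex orders, and the elimination step for $\{8,10,12,13\}$ obtained by removing the top $x$-variables (indeed $x_{357},x_{567},x_{347},x_{347}x_{257}$ sit at the top of $<_7,<_9,<_{11},<_{11}$ respectively). The only point left implicit in your write-up---also implicit in Section~2 of the paper but stated in the proof of Proposition~1.3---is that for rank~$3$ every quadratic binomial in $I_M$ automatically corresponds to a symmetric exchange \cite{Kashiwabara}, which is what upgrades ``quadratic Gr\"obner basis'' to \conjref{White_GB}.
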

If we can prove that $I_{M_{14}}$ has a quadratic Gr\"obner basis, it follows that \conjref{White_GB} is true for every matroid $M$ with $|E(M)|=7$.

\section*{Acknowledgement}
The authors would like to thank professor Hidefumi Ohsugi for useful comments and suggestions.

     \end{document}